\documentclass[10pt,a4paper]{amsart}
\usepackage{amssymb}
 \usepackage[latin1]{inputenc}
 \usepackage[english]{babel}
 \usepackage{amsfonts}
 \usepackage{amsmath}
 \usepackage{amsthm}
 \usepackage[dvips]{graphicx}
 \DeclareGraphicsExtensions{.pdf,.png,.jpg,.eps}
 \usepackage{epstopdf}
 \usepackage{fancyhdr}
 \usepackage{colortbl}
 \usepackage[pdftex]{hyperref}
\numberwithin{equation}{section}
\newtheorem{theorem}{Theorem}[section]

\theoremstyle{remark}
\newtheorem{remark}{Remark}[section]

\theoremstyle{definition}


\newcommand{\triple}[1]{{|\!|\!|#1|\!|\!|}}
\newcommand{\R}{\mathbb{R}}

\newcommand{\xx}{\langle x\rangle}

\begin{document}


\title[$L^p$-$L^q$ estimates for Electromagnetic Helmholtz equation.]
{$L^p$-$L^q$ estimates for Electromagnetic Helmholtz equation.}

\author{Andoni Garcia}
\address{Andoni Garcia: Universidad del Pais Vasco, Departamento de
Matem$\acute{\text{a}}$ticas, Apartado 644, 48080, Bilbao, Spain}
\email{andoni.garcia@ehu.es}


\thanks{The author is supported by the grant BFI06.42 of the Basque
Government}

\begin{abstract}
In space dimension $n\geq3$, we consider the electromagnetic Schr\"odinger Hamiltonian $H=(\nabla-iA(x))^2-V$ and the corresponding Helmholtz equation
  \begin{equation*}
   (\nabla-iA(x))^2u+u-V(x)u=f\quad \text{in}\quad \mathbb{R}^n.
  \end{equation*}
We extend the well known $L^p$-$L^q$ estimates for the solution of the free Helmholtz equation to the case when the electromagnetic hamiltonian $H$ is considered. 
 \end{abstract}

\date{\today}

\subjclass[2000]{35J10, 35L05, 58J45.}
\keywords{%
dispersive equations, Helmholtz equation,
magnetic potential}

\maketitle

\section{Introduction}\label{sec:introd}
This paper is devoted to the study of some estimates for the Helmholtz equation with electromagnetic potential. A very natural question is to extend the known results for the Helmholtz equation with constant coefficients to the case when we consider the perturbed Helmholtz equation by a potential. 
Our goal will be to extend the well known $L^{p}$-$L^{q}$ estimates for the free Helmholtz equation given in \cite{KRS}, \cite{CS}, \cite{Gut} and \cite{Gut1} to the case when we perturb the equation with an electromagnetic potential. More precisely, conditions on the electric and the magnetic part of the potential will be given in order to ensure that the estimates remain true. 
The $L^{p}$-$L^{q}$ estimates for the free Helmholtz equation are the following:
\begin{equation}\label{eq:lpestimates}
\|u\|_{L^q(\mathbb{R}^n)}\leq C  \|f\|_{L^p(\mathbb{R}^n)},
\end{equation}
where $u$ is a solution of 
\begin{equation}
\Delta u+(\tau\pm i\epsilon)u=-f\quad \tau,\epsilon>0.
\end{equation}
The exponents $p$ and $q$ in \eqref{eq:lpestimates} have to verify some specific conditions that will be specified later on. Here $C$ can depend on $\tau$, $p$, $q$ and $n$ and is independent of $\epsilon$.

The investigation of the estimates \eqref{eq:lpestimates} started in \cite{KRS}, where the study of uniform Sobolev estimates for constant coefficient second order differential operators was accomplished. Later on, in \cite{Gut}, \cite{Gut1} (See also \cite{CS}) the range of the exponents $p$ and $q$ where the estimates \eqref{eq:lpestimates} hold was determined.

Moreover, in the purely electric case i.e., for Schr\"odinger  Hamiltonians of the type $\Delta+V(x)$, where $V:\R^{n}\to\R$ is the electric potential, some positive results were given in \cite{RV}.

Therefore the aim of the paper is to prove the corresponding estimates \eqref{eq:lpestimates} in the case when the electromagnetic Schr\"odinger hamiltonian is considered .

In the first part we will prove that the existing results for the free Helmholtz equation can be extended to the perturbed equation if we impose precise decay conditions at infinity for the electric and the magnetic potential. This can be done without assuming smallness, neither for the electric part, nor for the magnetic part.

As it will be shown, for the electromagnetic case, the range for the exponents $p$ and $q$ where the estimates \eqref{eq:lpestimates} are valid is not the same as the one for the free case, hence in order to go further, we deal with the Helmholtz equation with purely electric potential, trying to obtain results in the same region of boundedness of the free equation.

Therefore, we consider the electromagnetic Schr\"odinger hamiltonian $H$ of the form 
\begin{equation}
H=(\nabla-iA(x))^2-V(x),
\end{equation}
and the corresponding Helmholtz equation in dimensions $n\geq3$, namely,
\begin{equation}\label{eq:Helmmagneticin}
	(\nabla-iA(x))^2u+u-V(x)u=f\quad \text{in}\quad \mathbb{R}^n.
\end{equation}
Here  $A:(A^1,\dots, A^n):\mathbb{R}^n\rightarrow\mathbb{R}^n$ is the magnetic potential and $V(x):\mathbb{R}^n\rightarrow\mathbb{R}$ is the electric potential. Since now on, we denote by
\begin{equation*}
	\nabla_{A}=\nabla-iA,\qquad\Delta_{A}=\nabla_{A}^2.
\end{equation*}
The magnetic potential $A$ is a mathematical construction which describes the interaction of particles with an external magnetic field. The magnetic field $B$, which is the physically measurable quantity, is given by
\begin{equation}\label{eq:B}
  B\in\mathcal M_{n\times n},
  \qquad
  B=DA-(DA)^t,
\end{equation}
i.e. it is the anti-symmetric gradient of the vector field $A$ (or, in geometrical terms, the differential $dA$ of the 1-form which is standardly associated to $A$). In dimension $n=3$ the action of $B$ on vectors is identified with the vector field $\text{curl}A$, namely
\begin{equation}\label{eq:B3}
  Bv=\text{curl}A\times v
  \qquad
  n=3,
\end{equation}
where the cross denotes the vectorial product in $\R^3$.

One of the most interesting facts related to the  $L^{p}$-$L^{q}$ estimates for the electromagnetic hamiltonian is that it seems that in order to conclude the boundedness of the solution, one should be able to bound the first order term that appears when the hamiltonian $H$ is expanded. More concretely, when the term $(\nabla-iA(x))^2$ in \eqref{eq:Helmmagneticin} is expanded, a first orden term, namely $A\cdot\nabla$, comes out and it is well known that there are no $L^{p}$-$L^{q}$ estimates for the gradient of the solution of the free Helmholtz equation,
\begin{equation}
\Delta u+u=-f\quad \text{in}\quad \mathbb{R}^n.
\end{equation}

We will proceed in the following way. Let us consider the modified Helmholtz equation with electromagnetic potential and fixed frequency $\tau=1$. It reads as follows
\begin{equation}\label{eq:Helmagmo}
	(\nabla-iA(x))^2u+(1\pm i\epsilon)u-V(x)u=f\quad \text{in}\quad \mathbb{R}^n,\quad \epsilon\neq0.
\end{equation}
\begin{remark}
For convenience we will deal only with the case $\tau=1$, in contrast with the case of general $\tau>0$.
\end{remark}

We will prove the corresponding $L^{p}$-$L^{q}$ estimates, independent of $\epsilon$, for the solution of \eqref{eq:Helmagmo}. The independence of $\epsilon$  will imply that the result remains true for the solution of \eqref{eq:Helmmagneticin}. This can be seen in \cite{IS}.

Our method is a mixture of an a priori estimate and perturbative arguments. This is what allows us to avoid smallness conditions in the potentials. Similar arguments have been used in the setting of the free Schr\"odinger equation, as can be seen in \cite{BPST} and \cite{DFVV}. Along the proof our basic tools will be the corresponding  $L^{p}$-$L^{q}$ estimates and a $L^{2}$-local estimate for the solution of the free Helmholtz equation, together with an a priori estimate for the solution of the modified Helmholtz equation with electromagnetic potential \eqref{eq:Helmagmo}.

Before we describe the results that we are going to use during our proof, let us introduce some basic notation. For $f:\mathbb{R}^n\to\mathbb{C}$ we define the Morrey-Campanato norm as 
\begin{equation}\label{eq:triplenorma}
\triple{f}^2:=\sup_{R>0}\frac{1}{R}\int_{|x|\leq R}|f|^2dx.
\end{equation}
Moreover, we denote, for $j\in \mathbb{Z}$, the annulus C(j) by 
\begin{equation*}
C(j)=\{x\in\mathbb{R}^n:2^j\leq|x|\leq2^{j+1}\},
\end{equation*}
\begin{equation}\label{eq:N}
	N(f):=\sum_{j\in \mathbb{Z}}\left(2^{j+1}\int_{C(j)}|f|^2dx\right)^{1/2},
\end{equation}
and we easily see the duality relation
\begin{equation*}
\int fgdx\leq\triple{g}\cdot N(f).
\end{equation*}
These norms were introduced by Agmon and H\"ormander in \cite{AH}.

\noindent During the exposition, the truncated version of the norms appearing above will be necessary. We will denote them respectively by

\begin{equation}\label{eq:triplenorma0}
\triple{f}_{0}^2:=\sup_{R\geq1}\frac{1}{R}\int_{|x|\leq R}|f|^2dx,
\end{equation}
\begin{equation}\label{eq:N0}
	N_{0}(f):=\sum_{j\geq 0}\left(2^{j+1}\int_{C(j)}|f|^2dx\right)^{1/2}.
\end{equation}
Let us also denote by $L^2_{\beta}(\mathbb{R}^n)$, for $\beta\in\mathbb{R}$, the Hilbert space of all functions $f$ such that $(1+|x|)^\beta f$ is square integrable over $\mathbb{R}^n$. The norm in this space is denoted by $\|\cdot\|_{\beta}$. Trivially we have that if $\beta>1/2$ and $f\in L^2_{\beta}(\mathbb{R}^n)$, then $N_{0}(f)<+\infty$.

As we have said, part of our method is perturbative, so in order to be able to start, let us remind what is known for the free Helmholtz equation. Firstly, we are going to state the result concerning the $L^{p}$-$L^{q}$ estimates which appears in \cite{Gut} and \cite{Gut1}. Let be
\begin{align*}
&A=\left(\frac{n+3}{2n},\frac{n-1}{2n}\right),\qquad \qquad \quad A'=\left(\frac{n+1}{2n},\frac{n-3}{2n}\right)\\
&
B=\left(\frac{n^2+4n-1}{2n(n+1)},\frac{n-1}{2n}\right),\qquad B'=\left(\frac{n+1}{2n},\frac{n^2-2n+1}{2n(n+1)}\right)\nonumber\\
\end{align*}
and $\Delta(n)$, the set of points of $[0,1]\times[0,1]$ given by
\begin{equation}
	\Delta(n)=\left\{\left(\frac{1}{p},\frac{1}{q}\right)\in[0,1]^2:\frac{2}{n+1}\leq \frac{1}{p}-\frac{1}{q}\leq\frac{2}{n}, \frac{1}{p}>\frac{n+1}{2n}, \frac{1}{q}<\frac{n-1}{2n}\right\}.
\end{equation}

\noindent The set $\Delta(n)$ is the trapezium $A$$B$$B'$$A'$ with the closed line segments $AB$ and $B'A'$ removed (see Figure  \ref{fig:deltan}). 


\begin{figure}[!htb]
\begin{center}
\includegraphics[]{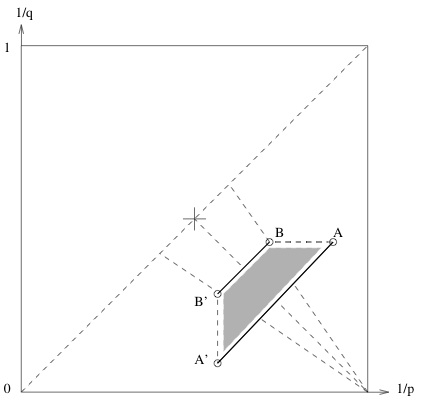}
\caption{$\Delta(n)$, $n\geq3$.}
\label{fig:deltan}
\end{center}
\end{figure}
\newpage
Now, we are in conditions to recall the existing result for the Helmholtz equation with constant coefficients. 
\begin{remark}
In \cite{Gut} (See also \cite{Gut1}), estimates for the solution of the equation perturbed with generals $\tau>0$ and $\epsilon>0$, are given, namely,
\begin{equation}
\Delta u+(\tau+i\epsilon)u=-F,\quad\tau,\epsilon>0.
\end{equation}
The special case where the point $(1/p,1/q)$ lies on the open segment $AA'$ and on the duality line $1/q=1-1/p$ in Figure \ref{fig:deltan} was previously obtained in [\cite{KRS}, Theorem 2.2 and 2.3 respectively].
\end{remark}
\noindent Recall that we will only deal with the case of fixed frequency $\tau=1$.
The Theorem reads as follows.

\begin{theorem}\label{thm:Helmfree}
	Let u be a solution of 
	\begin{equation}
		\Delta u+(1+i\epsilon)u=-F,\quad\epsilon>0.
	\end{equation}
	Then, there exists a constant C, independent of $\epsilon$, such that 
	\begin{equation}\label{eq:estfree}
		\|u\|_{L^q(\mathbb{R}^n)}=\|(\Delta+(1+i\epsilon))^{-1}F\|_{L^q(\mathbb{R}^n)}\leq C\|F\|_{L^p(\mathbb{R}^n)}
	\end{equation}
	when $(\frac{1}{p},\frac{1}{q})\in\Delta(n)$, $n\geq3$.
\end{theorem}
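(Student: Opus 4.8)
The plan is to reduce the estimate to a bound, uniform in $\epsilon$, for the Fourier multiplier operator that inverts the symbol of $\Delta+(1+i\epsilon)$. Taking the Fourier transform of $\Delta u+(1+i\epsilon)u=-F$ gives
\begin{equation*}
\hat u(\xi)=\frac{\hat F(\xi)}{|\xi|^2-1-i\epsilon},
\end{equation*}
so the whole problem is to control the operator $T_\epsilon F=\mathcal F^{-1}\bigl(m_\epsilon\,\hat F\bigr)$ with symbol $m_\epsilon(\xi)=(|\xi|^2-1-i\epsilon)^{-1}$, the decisive feature being that this symbol becomes singular on the unit sphere $\{|\xi|=1\}$ as $\epsilon\to0^+$. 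Equivalently, one may work with the explicit convolution kernel of $T_\epsilon$, a Hankel function that behaves near the diagonal like the Newtonian potential $|x-y|^{-(n-2)}$ and, for $|x-y|$ large, oscillates like $e^{i|x-y|}|x-y|^{-(n-1)/2}$.

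First I would split the symbol (or, dually, the kernel) into a part supported away from the sphere and a part concentrated in the neighbourhood $\{\bigl||\xi|-1\bigr|\le 1/2\}$. Away from the sphere, $m_\epsilon$ is smooth with good decay, and its contribution is controlled uniformly in $\epsilon$ by classical multiplier theory together with Young- and Sobolev-type inequalities; the entire difficulty lies in the singular piece near $\{|\xi|=1\}$, where the curvature of the sphere has to be exploited.

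The core of the argument is to treat the singular piece by Stein's method of analytic interpolation. One embeds $T_\epsilon$ into an analytic family of operators $T_z$ obtained by inserting suitably normalised powers $(|\xi|^2-1-i\epsilon)^{z}$, and interpolates between two endpoint estimates. The first endpoint is the $L^2$-based Stein--Tomas restriction--extension estimate for the sphere, which encodes the curvature and produces the line $\tfrac1p-\tfrac1q=\tfrac{2}{n+1}$ on which the vertices $B,B'$ sit. The second endpoint is the Hardy--Littlewood--Sobolev fractional-integration bound coming from the Newtonian-potential behaviour of the kernel, producing the line $\tfrac1p-\tfrac1q=\tfrac2n$ on which $A,A'$ sit. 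Complex interpolation between these two, combined with the duality $(\tfrac1p,\tfrac1q)\mapsto(1-\tfrac1q,1-\tfrac1p)$ that exchanges the $\pm i\epsilon$ operators and leaves $\Delta(n)$ invariant, fills in the whole trapezium $ABB'A'$; the admissibility conditions $\tfrac1q<\tfrac{n-1}{2n}$ and $\tfrac1p>\tfrac{n+1}{2n}$ are exactly the Stein--Tomas range requirements, which is why the closed segments $AB$ and $B'A'$ must be removed.

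The main obstacle is securing the uniformity in $\epsilon$ within the singular regime, since the constant must not degenerate as the resolvent approaches the spectrum. I would perform a dyadic decomposition $\{\bigl||\xi|-1\bigr|\sim 2^{-k}\}$ of the neighbourhood of the sphere and prove for each scale a restriction-type bound whose constant grows in $k$ slowly enough that the resulting series converges; summing over all scales up to the resolution dictated by $\epsilon$ then yields a bound independent of $\epsilon$, and passing to the limit recovers \eqref{eq:estfree}. The convergence of this dyadic sum rests entirely on the curvature of the sphere, through the decay of the extension operator, and establishing it uniformly — rather than the interpolation bookkeeping — is the genuine technical heart of the result, as carried out in \cite{Gut} and \cite{Gut1} (compare \cite{KRS}, \cite{CS}).
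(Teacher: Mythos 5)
You should first be aware that the paper does not prove Theorem \ref{thm:Helmfree} at all: it is quoted as a known result, with the proof delegated entirely to \cite{Gut}, \cite{Gut1} (and, for the segment $AA'$ and the duality line, to \cite{KRS}). So there is no in-paper argument to compare against; your proposal has to be judged against the strategy of those references. On that score your outline is essentially faithful: passing to the multiplier $m_\epsilon(\xi)=(|\xi|^2-1-i\epsilon)^{-1}$, separating the smooth part of the symbol from a neighbourhood of the unit sphere, obtaining the line $\tfrac1p-\tfrac1q=\tfrac2n$ from the Newtonian-potential/fractional-integration behaviour and the line $\tfrac1p-\tfrac1q=\tfrac2{n+1}$ from $L^2$-restriction to the sphere, and gluing these by Stein's analytic interpolation applied to the family $(|\xi|^2-1-i\epsilon)^z$, with a dyadic decomposition in $\bigl||\xi|-1\bigr|$ to get constants uniform in $\epsilon$, is exactly the architecture of Kenig--Ruiz--Sogge and of Guti\'errez's extension to the full trapezium.

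Two points deserve correction or emphasis. First, your attribution of the open conditions $\tfrac1p>\tfrac{n+1}{2n}$ and $\tfrac1q<\tfrac{n-1}{2n}$ to ``the Stein--Tomas range requirements'' is not accurate: the Stein--Tomas extension estimate requires $q\ge 2(n+1)/(n-1)$, i.e.\ $\tfrac1q\le\tfrac{n-1}{2(n+1)}$, which is strictly stronger than $\tfrac1q<\tfrac{n-1}{2n}$. The condition $\tfrac1q<\tfrac{n-1}{2n}$ is instead the sharp integrability threshold for the oscillatory tail $e^{i|x|}|x|^{-(n-1)/2}$ of the resolvent kernel (equivalently, of $\widehat{d\sigma}$), and it is a genuine necessary condition — the segments $AB$ and $B'A'$ are removed because the estimate fails there, not merely because one proof technique stops working. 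Second, the step you yourself identify as the technical heart — proving that the dyadic restriction-type bounds near $\{|\xi|=1\}$ sum with a constant independent of $\epsilon$, including the logarithmically critical behaviour on the line $\tfrac1p-\tfrac1q=\tfrac2{n+1}$ where naive summation of $O(1)$ pieces diverges and the complex powers $(|\xi|^2-1-i\epsilon)^z$ must do real work — is stated but not carried out. As it stands your text is a correct road map to the proofs in \cite{KRS}, \cite{Gut}, \cite{Gut1}, which is no less than the paper itself offers, but it is not a self-contained proof.
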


As we mentioned, another tool that will be crucial in the proof is an $L^2$-local estimate, which bounds the truncated Morrey-Campanato norm of the solution of the free equation, defined in \eqref{eq:triplenorma0}, in terms of the $L^p$ norm of the RHS data. This Theorem also appears in \cite{RV}, \cite{Gut} and \cite{Gut1}. The statement is the following.

\begin{theorem}\label{thm:triple}Let u be a solution of 
\begin{equation}
		\Delta u+(1+i\epsilon)u=-F,\quad \epsilon>0.
\end{equation}
If 
\begin{itemize}
\item[\textit{(i)}]
$n=3$ \text{or} $4$\quad\text{and} \quad$\frac{1}{n+1}\leq\frac{1}{p}-\frac{1}{2}<\frac{1}{2}$,\quad or
\item[\textit{(ii)}]
$n\geq5$\quad\text{and} \quad$\frac{1}{n+1}\leq\frac{1}{p}-\frac{1}{2}\leq\frac{2}{n}$,\quad
\end{itemize}
then, there exists a constant $C$, independent of $\epsilon$, such that
\begin{equation}\label{eq:tripletrun}
	\sup_{R\geq1}\left(\frac{1}{R}\int_{B_{R}}|u(x)|^2dx\right)^{1/2}\leq C\|F\|_{L^p(\mathbb{R}^n)}.
\end{equation}
\end{theorem}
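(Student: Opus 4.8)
The plan is to work directly with the solution operator $u=-(\Delta+1+i\epsilon)^{-1}F$, whose symbol is the multiplier $m_\epsilon(\xi)=(|\xi|^2-1-i\epsilon)^{-1}$, and to exploit the fact that the difficulty is concentrated entirely on the characteristic sphere $|\xi|=1$. First I would fix a smooth partition of unity separating a neighbourhood $\{\,||\xi|^2-1|\le 1/2\,\}$ of the sphere from its complement, writing $u=u_{\mathrm{near}}+u_{\mathrm{far}}$; since the bound must not depend on $\epsilon$, everything is arranged to be uniform as $\epsilon\to 0^+$, which is precisely what will let the conclusion pass to the limiting equation \eqref{eq:Helmmagneticin}. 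The gain coming from the truncation $R\ge1$ in $\triple{\cdot}_0$ is what allows the right-hand side to be the global norm $\|F\|_{L^p}$ rather than the Agmon--H\"ormander norm $N(F)$.

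The far part is routine. On $\{\,||\xi|^2-1|\ge 1/2\,\}$ the multiplier $m_\epsilon$ is smooth and bounded, and decays like $|\xi|^{-2}$ uniformly in $\epsilon$, so $u_{\mathrm{far}}$ is controlled by $(I-\Delta)^{-1}F=\DD^{-2}F$ up to a harmless smoothing term. By the Sobolev (Bessel-potential) embedding $\DD^{-2}\colon L^p\to L^2$ one gets $\|u_{\mathrm{far}}\|_{L^2(\mathbb R^n)}\lesssim\|F\|_{L^p}$, whence $\triple{u_{\mathrm{far}}}_0\lesssim\|F\|_{L^p}$ since $R\ge1$. This step is exactly where the \emph{upper} restriction on the exponent enters: the mapping $\DD^{-2}\colon L^p\to L^2$ requires $\tfrac1p-\tfrac12\le\tfrac2n$, the binding constraint for $n\ge5$, while for $n=3,4$ this is automatic and only the endpoint condition $\tfrac1p-\tfrac12<\tfrac12$ (that is, $p>1$) survives.

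The core is $u_{\mathrm{near}}$. Here, as $\epsilon\to0^+$, the singular multiplier concentrates on $|\xi|=1$ and $u_{\mathrm{near}}$ is governed by the extension of the restriction $\widehat F\big|_{S^{n-1}}$. I would pass to polar coordinates in frequency and use Plancherel to rewrite $\tfrac1R\int_{B_R}|u_{\mathrm{near}}|^2$ in terms of the $L^2$-restrictions $\widehat F\big|_{\rho S^{n-1}}$ over the spheres of radius $\rho$ in the neighbourhood of $|\xi|=1$; equivalently, one invokes the Agmon--H\"ormander bound asserting that the resolvent sends data into the Morrey--Campanato space with norm controlled, uniformly in $\epsilon$, by the $L^2$-mass of $\widehat F$ on those spheres. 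The decisive input is then the Stein--Tomas restriction theorem, $\|\widehat F\|_{L^2(\rho S^{n-1})}\lesssim\|F\|_{L^p}$, valid precisely when $\tfrac1p-\tfrac12\ge\tfrac1{n+1}$ --- exactly the \emph{lower} hypothesis of the statement. Composing the two yields $\triple{u_{\mathrm{near}}}_0\lesssim\|F\|_{L^p}$.

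I expect the uniform-in-$\epsilon$ Morrey bound for the resolvent near its singular sphere to be the main obstacle. The global $L^2$ norm of $u_{\mathrm{near}}$ blows up like $\epsilon^{-1/2}$, so one must genuinely use the local $R^{-1}$ normalisation to absorb the singular denominator $(|\xi|^2-1-i\epsilon)^{-1}$, reassembling the sphere traces across an $O(1/R)$-neighbourhood of $|\xi|=1$; the restriction gain has to be quantitatively strong enough to close this, which is exactly why the threshold $\tfrac1{n+1}$ appears, while the far/Sobolev balance together with the endpoint exclusion $p>1$ produces the upper constraints and the dichotomy between $n\le4$ and $n\ge5$. Once both parts are established at the extreme admissible exponents, real interpolation (or direct inspection of the open and closed constraints) fills in the full stated range of $p$.
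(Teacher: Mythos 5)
The paper does not actually prove Theorem \ref{thm:triple}; it is imported as a known result from \cite{RV}, \cite{Gut} and \cite{Gut1}. Your near/far decomposition of the resolvent multiplier $(|\xi|^2-1-i\epsilon)^{-1}$ about the characteristic sphere, with the Bessel-potential embedding $\DD^{-2}\colon L^p\to L^2$ handling the far piece and the Stein--Tomas restriction theorem handling the near piece, is precisely the strategy of those references, and your bookkeeping of where each hypothesis enters is correct: the lower bound $\tfrac1p-\tfrac12\ge\tfrac1{n+1}$ is the Stein--Tomas threshold, the upper bound $\tfrac1p-\tfrac12\le\tfrac2n$ is the Sobolev constraint binding for $n\ge5$, and for $n=3,4$ only the exclusion of $p=1$ survives. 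The final appeal to interpolation is unnecessary, since every $p$ in the stated range satisfies both constraints simultaneously.

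The one substantive reservation is that the step you yourself flag as ``the main obstacle'' --- the uniform-in-$\epsilon$ Morrey--Campanato bound for $u_{\mathrm{near}}$ in terms of $\sup_{\rho\sim1}\|\widehat F\|_{L^2(\rho S^{n-1})}$ --- is asserted rather than proved, and it is genuinely the heart of the matter. A naive layer decomposition fails because $\int_{1/2}^{3/2}|\rho^2-1-i\epsilon|^{-1}\,d\rho\sim\log(1/\epsilon)$, so one cannot simply integrate the sphere traces against the singular denominator. The standard way to close this (as in \cite{AH} and \cite{RV}) is to split the neighbourhood of the sphere into dyadic shells $||\xi|-1|\sim 2^{-k}$, use that a function with Fourier support in a shell of width $\delta$ satisfies $\frac1R\int_{B_R}|g|^2\lesssim(\delta+R^{-1})\|g\|_{L^2}^2$, and sum; alternatively one runs a $TT^*$ argument separating the imaginary part of the resolvent (a genuine restriction--extension operator) from its real part (a Hilbert transform in the radial variable). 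Without one of these devices the $R^{-1}$ normalisation alone does not absorb the logarithmic divergence, so as written your argument has a gap exactly at the point you identified; with it, the proof coincides with the one in the cited sources.
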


The last result concerns an a priori estimate for the solution of the perturbed equation. It states that, given precise conditions, without assuming smallness, on the decay at infinity for the the electric potential, the magnetic potential and the radial derivative of the electric potential, we have a precise control for $\|\nabla_{A}u\|_{\frac{-(1+\delta)}{2}}$ and $\|u\|_{\frac{-(1+\delta)}{2}}$. This result appears in \cite{IS} .

The Theorem is the following one.

\begin{theorem}\label{thm:apriori}
Let $n\geq3$, and $u\in C_{0}^\infty$ be a solution of
\begin{equation*}
	(\nabla-iA(x))^2u+(1\pm i\epsilon)u-V(x)u=f,\quad \epsilon\neq0.
\end{equation*}
Let us assume that:
\begin{itemize}
\item[(V)]\label{V}
$V(x)$ can be decomposed as $V(x)=V_{1}(x)+V_{2}(x)$, and there exist strictly positive constants $C$ and $\mu$ such that
\item[$(V_{1})$]\label{V_{1}}
\begin{equation*}
|V_{1}(x)|\leq C|x|^{-\mu},\quad(\partial_{r}V_{1})(x)\leq C|x|^{-1-\mu},\quad|x|\geq1,
\end{equation*}
\item[$(V_{2})$]\label{V_{2}}
\begin{equation*}
|V_{2}(x)|\leq C|x|^{-1-\mu},\quad|x|\geq1,
\end{equation*}
\item[(B)]\label{B}
\begin{equation*}
|B(x)|\leq C|x|^{-1-\mu},\quad|x|\geq1.
\end{equation*}
\end{itemize}
Choose $\delta>0$ sufficiently small (so that $\delta\leq\mu/2$, $\delta<1$). Then, there exists a constant $C=C(\delta)$, which depends uniformly in $\epsilon$, such that the following a priori estimate holds
\begin{equation}
	\|\nabla_{A}u\|_{\frac{-(1+\delta)}{2}}+\|u\|_{\frac{-(1+\delta)}{2}} \leq C\|f\|_{\frac{1+\delta}{2}}.
\end{equation}
\end{theorem}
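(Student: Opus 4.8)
The plan is to establish the a priori estimate of Theorem~\ref{thm:apriori} via a multiplier/integration-by-parts scheme of Morawetz type, adapted to the electromagnetic Hamiltonian. Since the result is attributed to \cite{IS}, I would follow the standard strategy for such resolvent estimates: multiply the equation
\[
\Delta_{A}u+(1\pm i\epsilon)u-Vu=f
\]
by a carefully chosen radial multiplier and its conjugate, then integrate over $\R^n$ and take real and imaginary parts. The natural choice is a combination of the Morawetz multiplier $\overline{\partial_r u}$ (or more precisely the gauge-covariant radial derivative $\overline{\partial_r^A u}$, where $\partial_r^A = \frac{x}{|x|}\cdot\nabla_A$) together with a weight $\psi(|x|)\bar u$, the weight being chosen to match the target norms $\|\cdot\|_{\pm(1+\delta)/2}$. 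The key algebraic fact is that the magnetic field $B$ enters only through the commutator of the covariant derivatives: $[\nabla_A^j,\nabla_A^k]=-iB_{jk}$, so the first-order term $A\cdot\nabla$ that obstructs the free estimates never appears by itself, and instead one sees the gauge-invariant quantity $B$, which is exactly why the hypothesis is phrased in terms of $B$ and not $A$.

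Concretely, the steps I would carry out are as follows. \emph{First}, I would write $\triple{\nabla_A u}$ and $\triple{u}$-type quantities and their duals $N_0$, and translate the target inequality into weighted $L^2$ bounds on dyadic annuli $C(j)$, so that the weights $(1+|x|)^{\pm(1+\delta)/2}$ become the natural scaling factors. \emph{Second}, I would compute the Morawetz identity: integrating $2\,\mathrm{Re}\,(\Delta_A u)\,\overline{\partial_r^A u}$ against a radial profile produces, after integration by parts, a positive boundary/bulk term controlling $\|\nabla_A u\|$ together with error terms involving $\partial_r$ of the multiplier and the magnetic field $B$. The spectral term $(1\pm i\epsilon)u$ contributes both a term that helps control $\|u\|$ (from the real part) and an $\epsilon$-term (from the imaginary part) which I would show has a favorable sign, thereby securing the uniformity in $\epsilon$. \emph{Third}, I would absorb the potential contributions: the term $Vu$ splits according to the decomposition $V=V_1+V_2$, where $V_2$ and $\partial_r V_1$ decay like $|x|^{-1-\mu}$ and so are directly dominated by the main positive terms on using $\delta\leq\mu/2$, while the long-range part $V_1$ is handled by incorporating its radial derivative into the multiplier identity (this is precisely why the hypothesis controls $\partial_r V_1$ rather than $V_1$ alone). \emph{Finally}, I would collect the error terms and show they can be absorbed into the left-hand side, leaving the stated estimate with $f$ measured in the dual weighted norm $\|\cdot\|_{(1+\delta)/2}$.

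The hard part will be the absorption step: the error terms generated by the magnetic field and by the long-range electric potential are only controlled by the same weighted $L^2$ norms that appear on the left-hand side, so one must verify that the decay rates $|x|^{-1-\mu}$ together with the choice $\delta\leq\mu/2$, $\delta<1$ produce genuinely smaller quantities (a gain of at least $|x|^{-\mu+2\delta}$ on each annulus) that can be subtracted off without any smallness assumption on the size of $V$ or $A$. In particular the repulsivity-type condition $\partial_r V_1\leq C|x|^{-1-\mu}$ (a sign and decay condition, not a smallness condition) is what makes this absorption work, and managing the interplay between the magnetic commutator term, the multiplier derivative, and these weights is the delicate technical core. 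Because the full argument is carried out in \cite{IS}, I would present the multiplier identity and the absorption mechanism in enough detail to make the roles of the hypotheses $(V_1)$, $(V_2)$, $(B)$ transparent, and refer to \cite{IS} for the remaining routine estimates.
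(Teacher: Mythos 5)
The paper does not actually prove Theorem~\ref{thm:apriori}: it is imported as a black box from \cite{IS} (see also \cite{F} for the Morrey--Campanato variants), so there is no in-paper argument to compare yours against. Your sketch is nonetheless pointed in the right direction: the estimate is indeed obtained by a multiplier/integration-by-parts scheme with radial multipliers built from $\overline{\partial_r^A u}$ and $\psi(|x|)\bar u$, the magnetic potential enters only through the gauge-invariant commutator $[\nabla_A^j,\nabla_A^k]=-iB_{jk}$ (more precisely through the tangential/trapping component $B\frac{x}{|x|}$, which is why the hypothesis is on $B$ and not on $A$), the long-range part $V_1$ is handled by differentiating it inside the multiplier identity so that only the one-sided bound on $\partial_r V_1$ is used, and the uniformity in $\epsilon$ comes from pairing the imaginary-part identity $\pm\epsilon\int|u|^2=\mathrm{Im}\int f\bar u$ with the real-part estimates. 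You correctly identify all of these structural points and the role of each hypothesis.

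The gap is that what you have written is a plan, not a proof: the entire difficulty of the theorem is concentrated in the absorption step you defer. Showing that the error terms produced by $B$, $V_2$, $\partial_r V_1$ and the derivatives of the multiplier can be absorbed into the positive bulk terms \emph{without any smallness assumption} on the potentials requires an explicit, quantitative choice of the radial profiles (typically a sum over dyadic scales, or the specific weights of \cite{IS}), and the inequality genuinely fails for some naive choices; asserting that a gain of $|x|^{-\mu+2\delta}$ per annulus suffices is not a substitute for exhibiting the multiplier for which the signed terms actually dominate. Two smaller inaccuracies: the $\epsilon$-term does not simply ``have a favorable sign'' --- for $1\pm i\epsilon$ one must run the argument separately for each sign and use the data term to control it, as the paper itself notes when discussing adjoints; and the conclusion requires the constant to be uniform over $\epsilon$ in a full neighborhood of $0$ (the set $K$ in the paper's remark), which your sketch never verifies. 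As written, the proposal would be acceptable only in the same spirit as the paper's own treatment, namely as a statement quoted from \cite{IS} with an indication of the mechanism, not as a self-contained proof.
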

\begin{remark}\label{rm:singularities}
Observe that, since the electric potential $V$ and magnetic potential $A$ must satisfy the conditions of the theorem, singularities at the origin are not allowed.
\end{remark}

\begin{remark}
Notice that the unique continuation property holds for the differential operator $H=(\nabla-iA(x))^2-V(x)$, as can be seen in \cite{R}.
The assumptions $(V)$, $(V_{1})$, $(V_{2})$ and $(B)$, together with this observation implies that the limiting absorption principle holds.
\end{remark}

\begin{remark}The conditions on the decay for the electric potential $V$ and the magnetic field $B$ given by $(V_{1})$, $(V_{2})$ and $(B)$ respectively, are sufficient for us, due we have fixed the frequency $\tau=1$. It can be seen in \cite{IS}, that the result is true provided $\tau$ and $\epsilon$ belong to the following set denoted by $K$
\begin{equation}
K=\{k=\tau+i\epsilon\in\mathbb{C}/\tau\in(\tau_{0},\tau_{1}), \epsilon\in(0,\epsilon_{1})\},
\end{equation} 
where $0<\tau_{0}<\tau_{1}<\infty$ and $0<\epsilon_{1}<\infty$.

Hence, the critical case $\tau=0$ is excluded. This situation requires more decay for both potentials (typically $\xx^{-(2+\epsilon)}$, $\epsilon>0$, for $B$ and $V$ if $n=3$ and $\xx^{-2}$ for $n\geq4$, where $\xx=(1+|x|^2)^{1/2}$), in order to obtain a priori estimates for the solution of the perturbed equation, as can be seen in \cite{F}, where Morrey -Campanato type estimates, uniform in $\epsilon$, are obtained for $\tau\geq0$. 

Note also that this result gives an a priori estimate without assuming smallness neither for the non repulsive component of the electric field nor for the trapping component of the magnetic field, defined by $B_{\tau}:=\frac{x}{|x|}B$.
\end{remark}

Once we have described all the tools which are going to be used, it is necessary to introduce the region where we are able to extend the known results for the free Helmholtz equation to the case when electromagnetic perturbations are considered. During the discussion, it will appear a subregion of $\Delta(n)$, for $n\geq3$, which will be denoted by $\Delta_{0}(n)$, given by
\begin{equation}
	\Delta_{0}(n)=\left\{\left(\frac{1}{p},\frac{1}{q}\right)\in\Delta(n):\frac{1}{n+1}\leq \frac{1}{p}-\frac{1}{2},\frac{1}{n+1}\leq \frac{1}{2}-\frac{1}{q}\right\}.
\end{equation}
The set $\Delta_{0}(n)$ is the solid triangle determined by the points $Q$, $Q'$ and $Q''$ (see Figure \ref{fig:deltan0}).\\
This will be the region of boundedness for the perturbed Helmholtz equation.


\begin{figure}[!htb]
\begin{center}
\includegraphics[]{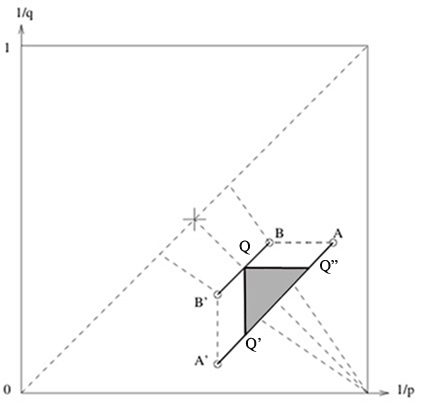}
\caption{$\Delta_{0}(n)$, $n\geq3$.}
\label{fig:deltan0}
\end{center}
\end{figure}

\begin{remark}
For the case of the perturbed electromagnetic equation we are not able to obtain a positive result of boundedness for the whole region $\Delta(n)$, since we have not control for the gradient term, namely $A\cdot\nabla$, outside $\Delta_{0}(n)$. However, when we set $A\equiv0$, and consider the electric hamiltonian, the results can be extended outside $\Delta(n)$ by imposing more decay on $V$.
\end{remark}


\section{Electromagnetic Helmholtz Equation.}\label{sec:magnetic}

In this section we will give the precise statement and the proof of the theorem, where we extend the known result for the free Helmholtz equation to the case when electromagnetic perturbations are considered. The basic theorems which will be used along the proof were given in the section \ref{sec:introd}, as well as the basic notation. First we will announce the result for the general electromagnetic case and afterwards, by setting $A\equiv0$, the electric case will be treated by extending our previous result.

Let us start by considering the solution of the Helmholtz equation with electromagnetic potential that satisfies either the ingoing or the outgoing Sommerfeld radiation condition. For $n\geq3$, it reads,

\begin{equation}\label{eq:Helmmagnetic}
	(\nabla-iA(x))^2u+u-V(x)u=f\quad \text{in}\quad \mathbb{R}^n,
\end{equation}
where $A:(A^1,\dots, A^n):\mathbb{R}^n\rightarrow\mathbb{R}^n$ is the magnetic potential and $V(x):\mathbb{R}^n\rightarrow\mathbb{R}$ is the electric potential.

We will assume that the magnetic potential $A$ satisfies the Coulomb gauge condition
\begin{equation}
 \nabla\cdot A=0.
\end{equation}

We will prove $L^{p}$-$L^{q}$ estimates for the solution of the equation \eqref{eq:Helmmagnetic}.
In order to do that we will consider the solution of \eqref{eq:Helmmagnetic} as the solution of the modified Helmholtz electromagnetic equation, 
\begin{equation}\label{eq:helma}
	(\nabla-iA(x))^2u+(1\pm i\epsilon)u-V(x)u=f\quad \text{in}\quad \mathbb{R}^n,\quad\epsilon\neq0,
\end{equation}
via limiting absorption principle, by taking the limit of the solution of \eqref{eq:helma} when $\epsilon$ goes to 0. We will obtain the corresponding $L^{p}$-$L^{q}$ estimates, independent of $\epsilon$, for the solution of \eqref{eq:helma}, so these will remain true for the solution of \eqref{eq:Helmmagnetic}. This is guaranteed  by the results appearing in \cite{IS}.

The goal is to determine the region of $p$ and $q$ where the solution of \eqref{eq:helma} satisfies $L^{p}$-$L^{q}$ estimates, namely, 
\begin{equation}
	\|u\|_{L^q(\mathbb{R}^n)}\leq C \|f\|_{L^p(\mathbb{R}^n)}.
\end{equation}
with $C$ independent of $\epsilon$.


The main result of this section is the following.


\begin{theorem}
\label{thm:elecmag}
\addcontentsline{toc}{subsection}{Theorem \ref{thm:elecmag}. $L^p$-$L^q$ estimates for Electromagnetic Helmholtz equation.}

Let u be a solution of 
\begin{equation}\label{eq:Helmmag}
	(\nabla-iA(x))^2u+(1\pm i\epsilon)u-V(x)u=f\quad \text{in}\quad \mathbb{R}^n,\quad n\geq3,\quad\epsilon\neq0.
\end{equation}
Let $V$ and $A$ satisfy $(V)$, $(V_{1})$, $(V_{2})$  and $(B)$  in Theorem \ref{thm:apriori}, and suppose that there exist constants $C,\mu>0$ such that
\begin{equation}\label{eq:potmag}
		|A(x)|\leq \frac{C}{(1+|x|)^{1+\mu}},\quad |V(x)|\leq \frac{C}{(1+|x|)^{1+\mu}}.
\end{equation}
Then, there exists a constant C, independent of $\epsilon$, such that 
	\begin{equation}
		\|u\|_{L^q(\mathbb{R}^n)}\leq C \|f\|_{L^p(\mathbb{R}^n)},
	\end{equation}
	when $\left(\frac{1}{p},\frac{1}{q}\right)\in\Delta_{0}(n)$.
\end{theorem}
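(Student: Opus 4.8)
The plan is to treat \eqref{eq:Helmmag} as a perturbation of the free Helmholtz equation and to close the argument with the a priori bound of Theorem \ref{thm:apriori}, which is exactly what removes any smallness requirement. Using the Coulomb gauge $\nabla\cdot A=0$ I expand $\Delta_{A}=\Delta-2iA\cdot\nabla-|A|^2$, so a solution of \eqref{eq:Helmmag} also solves the free equation $(\Delta+(1\pm i\epsilon))u=f+Wu$ with the first order perturbation
\begin{equation*}
Wu:=2iA\cdot\nabla u+|A|^2u+Vu=2iA\cdot\nabla_{A}u+(V-|A|^2)u,
\end{equation*}
where the last identity uses $\nabla u=\nabla_{A}u+iAu$ to trade the ordinary gradient for the magnetic one. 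Writing $R=(\Delta+(1\pm i\epsilon))^{-1}$ for the free resolvent, this gives the Duhamel-type identity $u=Rf+R(Wu)$.

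Set $s=(1+\delta)/2>1/2$. The two geometric constraints defining $\Delta_{0}(n)$ are precisely what is needed: $\frac{1}{n+1}\le\frac1p-\frac12$ places $p$ in the range of Theorem \ref{thm:triple}, hence $\triple{Rf}_{0}\lesssim\|f\|_{L^p}$, while $\frac{1}{n+1}\le\frac12-\frac1q$ is the dual condition, giving by duality the mapping $\|Rg\|_{L^q}\lesssim N_{0}(g)$. First I would apply the latter to the identity above,
\begin{equation*}
\|u\|_{L^q}\le\|Rf\|_{L^q}+\|R(Wu)\|_{L^q}\lesssim\|f\|_{L^p}+N_{0}(Wu),
\end{equation*}
the first term being controlled by Theorem \ref{thm:Helmfree} since $(1/p,1/q)\in\Delta_{0}(n)\subset\Delta(n)$. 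Because $|A|,|V|\lesssim\xx^{-1-\mu}$, a dyadic computation on the annuli $C(j)$ together with $\int_{C(j)}|g|^2\lesssim 2^{j(1+\delta)}\|g\|_{\frac{-(1+\delta)}{2}}^2$ yields $N_{0}(Wu)\lesssim\|\nabla_{A}u\|_{\frac{-(1+\delta)}{2}}+\|u\|_{\frac{-(1+\delta)}{2}}$, the series over $j\ge0$ converging precisely because $\delta\le\mu/2$.

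It remains to prove the crux, namely $\|\nabla_{A}u\|_{\frac{-(1+\delta)}{2}}+\|u\|_{\frac{-(1+\delta)}{2}}\lesssim\|f\|_{L^p}$; note that Theorem \ref{thm:apriori} only offers the right-hand side $\|f\|_{\frac{1+\delta}{2}}$, a growing-weight $L^2$ norm that cannot be dominated by $\|f\|_{L^p}$ for general data. To bridge this gap I would split off the free part by setting $v:=u-Rf$. A direct computation using the resolvent identity shows that $v$ solves the perturbed equation with source $WRf$, so Theorem \ref{thm:apriori} applies to $v$ and gives $\|\nabla_{A}v\|_{\frac{-(1+\delta)}{2}}+\|v\|_{\frac{-(1+\delta)}{2}}\lesssim\|WRf\|_{\frac{1+\delta}{2}}$. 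Now the decay $|A|,|V|\lesssim\xx^{-1-\mu}$ with $\delta\le\mu$ turns the growing weight into a decaying one, $\|WRf\|_{\frac{1+\delta}{2}}\lesssim\|\nabla Rf\|_{\frac{-(1+\delta)}{2}}+\|Rf\|_{\frac{-(1+\delta)}{2}}$, and these two local $L^2$ quantities are bounded by $\|f\|_{L^p}$ through Theorem \ref{thm:triple} and its gradient analogue (legitimate here because at the fixed frequency $\tau=1$ the gradient of the free resolvent obeys the same local $L^2$ bound as the resolvent itself). Adding back $Rf$, whose magnetic weighted norms are controlled through $\|\nabla_{A}Rf\|_{\frac{-(1+\delta)}{2}}\lesssim\|\nabla Rf\|_{\frac{-(1+\delta)}{2}}+\|Rf\|_{\frac{-(1+\delta)}{2}}$, closes the bound for $\|\nabla_{A}u\|_{\frac{-(1+\delta)}{2}}+\|u\|_{\frac{-(1+\delta)}{2}}$ and hence the theorem.

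The main obstacle is this last step: the a priori estimate is genuinely a weighted $L^2\to L^2$ statement, so the whole difficulty is to feed it $L^p$ data without losing uniformity in $\epsilon$ and without smallness. The decomposition $u=Rf+v$ is what lets the free resolvent absorb the $L^p\to$ local-$L^2$ passage while Theorem \ref{thm:apriori} handles the non-perturbative remainder; controlling the first order term $A\cdot\nabla u$ is exactly where the magnetic gradient $\nabla_{A}u$ of Theorem \ref{thm:apriori}, and the restriction to the smaller region $\Delta_{0}(n)$, become indispensable. Finally, all bounds being uniform in $\epsilon$, the limiting absorption principle of \cite{IS} transfers the estimate to the solution of \eqref{eq:Helmmagnetic}, while the $C_{0}^\infty$ hypothesis in Theorem \ref{thm:apriori} is removed by a standard density argument.
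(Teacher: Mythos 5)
Your Step~1 coincides with the paper's: expand $\Delta_{A}$ in the Coulomb gauge, apply the dual of Theorem~\ref{thm:triple} to the perturbative terms and Theorem~\ref{thm:Helmfree} to $f$, and reduce everything to the two weighted quantities $\|u\|_{\frac{-(1+\delta)}{2}}$ and $\|\nabla_{A}u\|_{\frac{-(1+\delta)}{2}}$. The gap is in how you bound these by $\|f\|_{L^p}$. Your route feeds the source $WRf$ into the a priori estimate of Theorem~\ref{thm:apriori}, which forces you to control $\|A\cdot\nabla Rf\|_{\frac{1+\delta}{2}}$, i.e.\ ultimately $\triple{\nabla R_{0}f}_{0}\lesssim\|f\|_{L^p}$ --- the ``gradient analogue of Theorem~\ref{thm:triple}'' that you assert only parenthetically. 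This is not among the stated tools, it is nowhere proved, and it is precisely the kind of gradient bound for the free resolvent whose absence the paper identifies as the central obstruction. Nor is it innocuous: the high-frequency part of $\nabla R_{0}$ is an operator of order $-1$, so from $f\in L^p$ it only yields $\nabla R_{0}f\in L^{p^*}$ with $1/p^*=1/p-1/n$, and $\Delta_{0}(n)$ contains points with $1/p>1/2+1/n$ (e.g.\ near the vertex where $1/p-1/q=2/n$ meets $1/q=1/2-1/(n+1)$), where $p^*<2$ and this piece need not even lie in $L^2$ locally with uniform bounds. Your entire estimate for $\|u\|_{\frac{-(1+\delta)}{2}}+\|\nabla_{A}u\|_{\frac{-(1+\delta)}{2}}$ rests on this unproved claim.

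The paper avoids any gradient estimate for the free resolvent. It obtains $\|u\|_{\frac{-(1+\delta)}{2}}\lesssim\|f\|_{L^p}$ by dualizing the Step~1 estimate $\|u\|_{L^q}\lesssim\|f\|_{\frac{1+\delta}{2}}$ (the adjoint is the resolvent with $\mp\epsilon$, and $\Delta_{0}(n)$ is invariant under $(1/p,1/q)\mapsto(1-1/q,1-1/p)$). For the magnetic gradient it multiplies the equation by $\varphi\bar u$ with $\varphi=(1+|x|)^{-(1+\delta)}$ and takes real parts, giving
\begin{equation*}
\int_{\R^n}\varphi|\nabla_{A}u|^2\,dx\lesssim\int_{\R^n}\varphi|u|^2\,dx+\int_{\R^n}|f\varphi\bar u|\,dx
\lesssim\|u\|_{\frac{-(1+\delta)}{2}}^2+\|f\|_{L^{p}}\|u\|_{L^{q}}\|\varphi\|_{L^{r}},
\end{equation*}
with $1=1/p+1/q+1/r$; the requirement $\varphi\in L^{r}$ (i.e.\ $n/r\le1$) restricts this to the sub-duality region $\Delta_{0}^-(n)$, after which the term $\alpha\|u\|_{L^q}$ is absorbed and one more duality recovers all of $\Delta_{0}(n)$. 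If you wish to keep your decomposition $u=Rf+v$, you must either actually prove the truncated Morrey--Campanato bound for $\nabla R_{0}$ with $L^p$ data on all of $\Delta_{0}(n)$, or replace that step by an integration-by-parts identity of the above type.
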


\begin{proof}

\begin{remark}Notice that there are no smallness assumption neither for the electric potential $V$ nor for the magnetic potential $A$. Also we bound the solution only assuming short-range decay for $V$ and $A$. As we said in Remark \ref{rm:singularities}, singularities at the origin for $V$ and $A$ are not considered.
\end{remark}


\noindent{\bf Step 1.} It will be proved that, whenever $\left(\frac{1}{p},\frac{1}{q}\right)\in\Delta_{0}(n)$ then we get the following 
\begin{equation}
  \|u\|_{L^q(\mathbb{R}^n)}\leq C \|f\|_{\frac{1+\delta}{2}}.
\end{equation}
Let $u$ be a solution of \eqref{eq:Helmmag}. Since $\nabla\cdot A\equiv0$, we can expand the term $(\nabla-iA)^2$ in the following form
\begin{equation}
	(\nabla-iA)^2u=\Delta u-2iA\cdot\nabla_{A}u+|A|^2u.
\end{equation}
This is the key point in order to consider the electromagnetic case as a perturbation of the free equation. As can be seen, there appear terms of order zero and order one.
So by passing terms to the RHS, we have that $u$ is solution of the following equation
\begin{equation}
	\Delta u+(1\pm i\epsilon)u=f+2iA\cdot\nabla_{A}u-|A|^2u+Vu.
\end{equation}
Now we apply the result coming from Theorem \ref{thm:triple}. By considering the dual estimate of \eqref{eq:tripletrun}, we get that, if $\left(\frac{1}{p},\frac{1}{q}\right)\in\Delta_{0}(n)$ it holds
\begin{align}\label{eq:desNmag}
	\|u\|_{L^q(\mathbb{R}^n)}&=\|(\Delta+(1\pm i\epsilon))^{-1}(f+2iA\cdot\nabla_{A}u-|A|^2u+Vu)\|_{L^q(\mathbb{R}^n)}\\
	&\leq C(N_{0}(f)+N_{0}(2iA\cdot\nabla_{A}u)+N_{0}(|A|^2u)+N_{0}(Vu)).
	\nonumber
\end{align}
with $C$ independent of $\epsilon$ and $N_{0}$ defined in \eqref{eq:N0}.

Now we continue by treating the terms appearing on the RHS of \eqref{eq:desNmag}.
First we deal with the term $N_{0}(2iA\cdot\nabla_{A}u)$. From \eqref{eq:potmag}, we get that this term can be bounded as
\begin{align}
	N_{0}(2iA\cdot\nabla_{A}u)^2&=C\sum_{j\geq 0}2^{j+1}\int_{C(j)}|A\cdot\nabla_{A}u|^2dx\\
	&
	\leq C\sum_{j\geq 0}2^{j}\int_{C(j)}|A|^2|\nabla_{A}u|^2dx
	\nonumber
	\\
       	&
	\leq C \sum_{j\geq0}2^{j(\delta-2\mu)}\int_{\mathbb{R}^n}\frac{|\nabla_{A}u|^2}{(1+|x|)^{1+\delta}}dx.
	\nonumber
\end{align}
Therefore, we finally have
\begin{equation}\label{eq:despgrad}
	N_{0}(2iA\cdot\nabla_{A}u)\leq C\|\nabla_{A}u\|_{\frac{-(1+\delta)}{2}}.
\end{equation}

Let us continue with the term $N_{0}(|A|^2u)$. As before, from \eqref{eq:potmag}, we can treat this term as follows
\begin{align}
	N_{0}(|A|^2u)^2&=\sum_{j\geq 0}2^{j+1}\int_{C(j)}||A|^2u|^2dx\\
	&
	\leq C\sum_{j\geq 0}2^{j}\int_{C(j)}|A|^4|u|^2dx
	\nonumber
	\\
	&
	\leq C \sum_{j\geq0}2^{j(-2+\delta-4\mu)}\int_{\mathbb{R}^n}\frac{|u|^2}{(1+|x|)^{1+\delta}}dx.
	\nonumber
\end{align}
Hence, we get 
\begin{equation}\label{eq:despmod}
	N_{0}(|A|^2u)\leq C\|u\|_{\frac{-(1+\delta)}{2}}.
\end{equation}
The last term is $N_{0}(Vu)$. Similarly, we obtain from  \eqref{eq:potmag}
\begin{align}
	N_{0}(Vu)^2&=\sum_{j\geq 0}2^{j+1}\int_{C(j)}|Vu|^2dx\\
	&
	\leq C\sum_{j\geq 0}2^{j}\int_{C(j)}|V|^2|u|^2dx
	\nonumber
	\\
	&
	\leq C \sum_{j\geq0}2^{j(\delta-2\mu)}\int_{\mathbb{R}^n}\frac{|u|^2}{(1+|x|)^{1+\delta}}.dx
	\nonumber	
\end{align}
So, it verifies
\begin{equation}\label{eq:despele}
	N_{0}(Vu)\leq C\|u\|_{\frac{-(1+\delta)}{2}}.
\end{equation}
From \eqref{eq:desNmag}, \eqref{eq:despgrad}, \eqref{eq:despmod} and \eqref{eq:despele}, we get that, whenever $\left(\frac{1}{p},\frac{1}{q}\right)\in\Delta_{0}(n)$, the $L^q$ norm of $u$ can be bounded as
\begin{align}\label{eq:N0f}
	\|u\|_{L^q(\mathbb{R}^n)}&=\|(\Delta+(1\pm i\epsilon))^{-1}(f+2iA\cdot\nabla_{A}u-|A|^2u+Vu)\|_{L^q(\mathbb{R}^n)}\\
	&\leq C(N_{0}(f)+N_{0}(2iA\cdot\nabla_{A}u)+N_{0}(|A|^2u)+N_{0}(Vu)).
	\nonumber
	\\
	&\leq CN_{0}(f)+C_{1}\|\nabla_{A}u\|_{\frac{-(1+\delta)}{2}}+C_{2}\|u\|_{\frac{-(1+\delta)}{2}}.
	\nonumber
\end{align}

Now we remind the a priori estimate given by Theorem \ref{thm:apriori}, which ensures that, under the assumptions $(V)$, $(V_{1})$, $(V_{2})$ and $(B)$ for $V$ and $A$ respectively, there exists a constant $C$, such that the following holds
\begin{equation}\label{eq:f}
	\|\nabla_{A}u\|_{\frac{-(1+\delta)}{2}}+\|u\|_{\frac{-(1+\delta)}{2}} \leq C\|f\|_{\frac{1+\delta}{2}}.
\end{equation}
\begin{remark}The constant $C$ which appears here depends uniformly in $\epsilon$.
\end{remark}
\noindent So, from \eqref{eq:N0f} and \eqref{eq:f}, we get
\begin{equation}\label{eq:N0ff}
\|u\|_{L^q(\mathbb{R}^n)}\leq CN_{0}(f)+C_{1}\|f\|_{\frac{1+\delta}{2}}.
\end{equation}
Moreover, it holds that $N_{0}(f)$ can be bounded as
\begin{equation}
	N_{0}(f)\leq C\|f\|_{\frac{1+\delta}{2}}.
\end{equation}
This, together with \eqref{eq:N0ff} concludes that, if $u$ is a solution of \eqref{eq:Helmmag}, it verifies
\begin{equation}
	 \|u\|_{L^q(\mathbb{R}^n)}\leq C\|f\|_{\frac{1+\delta}{2}}.
\end{equation}
Therefore, we get the desired estimate.

\noindent{\bf Step 2.} By applying duality to the last estimate, we get that if $\left(\frac{1}{p},\frac{1}{q}\right)\in\Delta_{0}(n)$,
\begin{equation}\label{eq:step2}
	\|u\|_{\frac{-(1+\delta)}{2}}\leq C \|f\|_{{L^p(\mathbb{R}^n)}}
\end{equation}
\begin{remark}
The adjoint operator is the one corresponding to $\mp \epsilon$. Since we can do the same argument for both signs, all the computations are justified.
\end{remark}

\noindent{\bf Step 3.} This is the final step in the proof. As we said in the introduction the main difficulty will be to handle the first order term given by $A\cdot\nabla_{A}u$, since there are no $L^p$-$L^q$ estimates for the gradient of the solution of the free Helmhotz equation. Instead of considering this norm our argument will end up by treating $\|\nabla_{A}u\|_{\frac{-(1+\delta)}{2}}$, and this norm will be under control in the region $\Delta_{0}(n)$.

Consequently, we have that if $\left(\frac{1}{p},\frac{1}{q}\right)\in\Delta_{0}(n)$, from the $L^{p}$-$L^{q}$ estimates for the solution of the free equation, namely \eqref{eq:estfree}, given in Theorem \ref{thm:Helmfree}, and proceeding as in the step 1 for the terms $2iA\cdot\nabla_{A}u$, $|A|^2u$ and $Vu$, we get
\begin{align}
	\|u\|_{L^q(\mathbb{R}^n)}&=\|(\Delta+(1\pm i\epsilon))^{-1}(f+2iA\cdot\nabla_{A}u-|A|^2u+Vu)\|_{L^q(\mathbb{R}^n)}\\
	&\leq C\|f\|_{L^p(\mathbb{R}^n)}+C_{1}(N_{0}(2iA\cdot\nabla_{A}u)+N_{0}(|A|^2u)+N_{0}(Vu)).
	\nonumber
\end{align}
where $C$ and $C_{1}$ do not depend on $\epsilon$.

Let us remind that, from \eqref{eq:potmag} it holds
\begin{align}
&N_{0}(2iA\cdot\nabla_{A}u)\leq C_{1}\|\nabla_{A}u\|_{\frac{-(1+\delta)}{2}},
\\
&
N_{0}(|A|^2u)\leq C_{2}\|u\|_{\frac{-(1+\delta)}{2}},
\nonumber
\\
&
N_{0}(Vu)\leq C_{3}\|u\|_{\frac{-(1+\delta)}{2}}.
\nonumber
\end{align}
Therefore, we get
\begin{equation}
	\|u\|_{L^q(\mathbb{R}^n)}\leq C\|f\|_{L^p(\mathbb{R}^n)}+C_{1}\|\nabla_{A}u\|_{\frac{-(1+\delta)}{2}}+C_{2}\|u\|_{\frac{-(1+\delta)}{2}}.
\end{equation}
Finally we conclude that if $u$ is solution of \eqref{eq:Helmmag}, by applying \eqref{eq:step2} we can bound its $L^q$ norm as follows
\begin{equation}\label{eq:grad}
	\|u\|_{L^q(\mathbb{R}^n)}\leq C\|f\|_{L^p(\mathbb{R}^n)}+C_{1}\|\nabla_{A}u\|_{\frac{-(1+\delta)}{2}}.
\end{equation}
It remains to bound $\|\nabla_{A}u\|_{\frac{-(1+\delta)}{2}}$. This can be done in the following way. Let us consider a radial function $\varphi=\varphi(|x|)\in C_{0}^\infty$. By multiplying Helmholtz equation \eqref{eq:Helmmag} by $\varphi\bar{u}$ in the $L^2$-sense and taking the resulting real parts it gives the identity
\begin{align}\label{eq:identity}
-&\int_{\mathbb{R}^{n}}\varphi|\nabla_{A}u|^2dx+\frac{1}{2}\int_{\mathbb{R}^{n}}\Delta\varphi|u|^2dx-\int_{\mathbb{R}^{n}}\varphi V|u|^2dx+\int_{\mathbb{R}^{n}}\varphi|u|^2dx\\
&=\Re\int_{\mathbb{R}^{n}} f\varphi\bar{u}dx.
\nonumber
\end{align}
From this, passing some terms to the RHS and taking modulus, it holds
\begin{align}\label{eq:identity}
&\int_{\mathbb{R}^{n}}\varphi|\nabla_{A}u|^2dx\leq\frac{1}{2}\int_{\mathbb{R}^{n}}|\Delta\varphi||u|^2dx+\int_{\mathbb{R}^{n}}|\varphi| |V||u|^2dx+\int_{\mathbb{R}^{n}}|\varphi||u|^2dx\\
&+\int_{\mathbb{R}^{n}} |f\varphi\bar{u}|dx.
\nonumber
\end{align}

Assume for simplicity that $f$ has compact support and therefore from {\bf Step 1} we have that
\begin{equation}
	 \|u\|_{L^q(\R^n)}<+\infty. 
 \end{equation}
Then we proceed by density.

Now we pass to choose the appropriate function $\varphi$. Let us consider $\varphi=\varphi(|x|)=\frac{1}{(1+|x|)^{1+\delta}}$.\\
Now, we start to bound some of the terms appearing in the RHS of \eqref{eq:identity}. We trivially have
\begin{equation}
|\Delta\varphi|\leq C|\varphi|,\qquad
\end{equation}
so it holds

\begin{equation}
\int_{\mathbb{R}^{n}}|\Delta\varphi||u|^2dx\leq C\int_{\mathbb{R}^{n}}|\varphi||u|^2dx,
\end{equation}
and from \eqref{eq:potmag}, we obtain
\begin{equation}
 \int_{\mathbb{R}^{n}}|\varphi| |V||u|^2dx\leq C\int_{\mathbb{R}^{n}}|\varphi||u|^2dx.
\end{equation}
Then, we conclude
\begin{equation}\label{eq:ineR}
\int_{\mathbb{R}^{n}}\frac{|\nabla_{A}u|^2}{(1+|x|)^{1+\delta}}dx\leq C\int_{\mathbb{R}^{n}}\frac{|u|^{2}}{(1+|x|)^{1+\delta}}dx+\int_{\mathbb{R}^{n}}|f\varphi\bar{u}|dx
\end{equation}
For the last term, by applying H\"older inequality with $1=\frac{1}{p}+\frac{1}{q}+\frac{1}{r}$, we can bound it as
\begin{equation}\label{eq:hol}
	\int_{\mathbb{R}^n}|f\varphi\bar{u}|dx\leq\|f\|_{L^{p}}\|u\|_{L^{q}}\|\varphi\|_{L^r},\\
	\end{equation}
where $r$ is to be determined. From \eqref{eq:ineR} and \eqref{eq:hol} we obtain that
\begin{equation}
\int_{\mathbb{R}^{n}}\frac{|\nabla_{A}u|^2}{(1+|x|)^{1+\delta}}dx\leq C\int_{\mathbb{R}^{n}}\frac{|u|^{2}}{(1+|x|)^{1+\delta}}dx+C_{1}\|f\|_{L^{p}}\|u\|_{L^{q}}.
\end{equation}
This leads to the crucial estimate
\begin{equation}\label{eq:norms}
\|\nabla_{A}u\|_{\frac{-(1+\delta)}{2}}\leq C\|u\|_{\frac{-(1+\delta)}{2}}+\alpha\|u\|_{L^{q}}+C(\alpha)\|f\|_{L^{p}},
\end{equation}
for $\alpha,C(\alpha)>0$.

Therefore, we have to find the region for $p$ and $q$, for $(\frac{1}{p},\frac{1}{q})\in\Delta_{0}(n)$, where the following conditions are satisfied
\begin{equation}\label{eq:conditions}
\frac{n}{r}\leq1,\qquad1=\frac{1}{p}+\frac{1}{q}+\frac{1}{r}.
\end{equation}
We have that \eqref{eq:conditions} holds if $(\frac{1}{p},\frac{1}{q})\in\Delta_{0}^-(n)$, where $\Delta_{0}^-(n)$ is given by
\begin{equation}
	\Delta_{0}^-(n)=\left\{\left(\frac{1}{p},\frac{1}{q}\right)\in\Delta_{0}(n):\frac{1}{q}\leq 1-\frac{1}{p}\right\}.
\end{equation}
The region $\Delta_{0}^-(n)$ is determined by the points $\left(\frac{1}{p},\frac{1}{q}\right)\in\Delta_{0}(n)$ under the diagonal $1/q=1-1/p$, including the points $\left(\frac{1}{p},\frac{1}{q}\right)$ in the duality line (see Figure \ref{fig:deltan0med1}).

\begin{figure}[!htb]
\begin{center}
\includegraphics[]{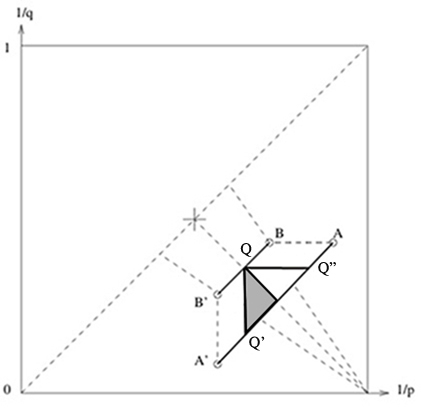}
\caption{$\Delta_{0}^-(n)$, $n\geq3$.}
\label{fig:deltan0med1}
\end{center}
\end{figure}
Finally we have that if $(\frac{1}{p},\frac{1}{q})\in\Delta_{0}^-(n)$, from \eqref{eq:step2} and \eqref{eq:norms} it holds
\begin{equation}
\|\nabla_{A}u\|_{\frac{-(1+\delta)}{2}}\leq C\|f\|_{L^{p}}+\alpha\|u\|_{L^{q}}+C(\alpha)\|f\|_{L^{p}},
\end{equation}
for $\alpha, C(\alpha)>0$. This, together with \eqref{eq:grad} leads to
\begin{equation}
	\|u\|_{L^q(\mathbb{R}^n)}\leq C\|f\|_{L^p(\mathbb{R}^n)}+\alpha\|u\|_{L^{q}}+C(\alpha)\|f\|_{L^{p}},
\end{equation}
for $(\frac{1}{p},\frac{1}{q})\in\Delta_{0}^-(n)$. Now we choose $\alpha$ sufficiently small and conclude for $(\frac{1}{p},\frac{1}{q})\in\Delta_{0}^-(n)$ the desired bound
\begin{equation}
		\|u\|_{L^q(\mathbb{R}^n)}\leq C \|f\|_{L^p(\mathbb{R}^n)}.
\end{equation}
Finally, by applying duality we have that the result is true for $(\frac{1}{p},\frac{1}{q})\in\Delta_{0}(n)$. The proof is complete.
\end{proof}

Notice that the region of boundedness for the solution of the Helmholtz equation with electromagnetic potential is smaller than the region obtained in the free case. This is due to the presence of the first order term. Therefore it is natural to consider if, whenever $A\equiv0$ (i.e., we deal with the Helmholtz equation with electric potential), this region can be extended to the whole $\Delta(n)$. This will be our next aim.

This idea is resumed in the following result, where we extend the boundedness of the solution to the whole $\Delta(n)$. We have been able to prove the next Theorem.


\begin{theorem}
\label{thm:extension}
\addcontentsline{toc}{subsection}{Theorem \ref{thm:extension}. $L^p$-$L^q$ estimates for Electric Helmholtz equation.}

Let u be a solution of 
	\begin{equation}\label{eq:Helmelec}
		-\Delta u+(1\pm i\epsilon)u+V(x)u=f,\quad \text{in}\quad \mathbb{R}^n,\quad n\geq3,\quad\epsilon\neq0.
	\end{equation}
	If $V$ satisfies $(V)$, $(V_{1})$ and $(V_{2})$ in Theorem \ref{thm:apriori}, and suppose that there exist constants $C,\gamma>0$ such that
	\begin{equation}
		|V(x)|\leq \frac{C}{(1+|x|)^{\gamma+\mu}},
	\end{equation}
	and $\gamma$ satisfies for $\left(\frac{1}{p},\frac{1}{q}\right)\in\Delta(n)\setminus\Delta_{0}(n)$,
	\begin{equation}\label{eq:gamma}
	\gamma >
	\begin{cases}
	\frac{1+\delta}{2}-\mu+n\left\{\frac{2}{n+1}-(\frac{1}{2}-\frac{1}{q})\right\},\quad\frac{1}{2n}<\frac{1}{2}-\frac{1}{q}<\frac{1}{n+1},\\
	\frac{1+\delta}{2}-\mu+n\left\{\frac{2}{n+1}-(\frac{1}{p}-\frac{1}{2})\right\},\quad\frac{1}{2n}<\frac{1}{p}-\frac{1}{2}<\frac{1}{n+1},
	\end{cases}
	\end{equation}
	then, there exists a constant C, independent of $\epsilon$, such that 
	\begin{equation}
		\|u\|_{L^q(\mathbb{R}^n)}\leq C \|f\|_{L^p(\mathbb{R}^n)}.
	\end{equation}

\end{theorem}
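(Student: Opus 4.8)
The plan is to treat $Vu$ as a zeroth-order perturbation and to invert the free part by the resolvent of Theorem~\ref{thm:Helmfree}, running the perturbative scheme of Theorem~\ref{thm:elecmag} on the larger region $\Delta(n)$. The decisive simplification over the electromagnetic case is that, with $A\equiv0$, the expansion of $(\nabla-iA)^2$ produces no first-order term, so there is nothing playing the role of $A\cdot\nabla_{A}u$; the obstruction that confined Theorem~\ref{thm:elecmag} to $\Delta_{0}(n)$ therefore disappears, and the full free estimate \eqref{eq:estfree}, valid on all of $\Delta(n)$, is at our disposal. Since the $\Delta_{0}(n)$ part is already covered by Theorem~\ref{thm:elecmag} (take $A\equiv0$), it suffices to consider $(1/p,1/q)\in\Delta(n)\setminus\Delta_{0}(n)$. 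Inside $\Delta(n)$ one cannot have both $\frac1p-\frac12<\frac1{n+1}$ and $\frac12-\frac1q<\frac1{n+1}$, since their sum would violate $\frac1p-\frac1q\ge\frac2{n+1}$; the two alternatives in \eqref{eq:gamma} are thus disjoint and exhaust $\Delta(n)\setminus\Delta_{0}(n)$. As the adjoint equation corresponds to $\mp\epsilon$ and both $\Delta(n)$ and $\Delta_{0}(n)$ are symmetric under $(1/p,1/q)\mapsto(1-1/q,1-1/p)$, I may fix the second alternative, $\frac1{2n}<\frac1p-\frac12<\frac1{n+1}$, and recover the first by duality.

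On this alternative $\frac12-\frac1q\ge\frac1{n+1}$, so the dual form of Theorem~\ref{thm:triple} applies at the exponent $q$ and, exactly as in Step~1 of Theorem~\ref{thm:elecmag}, gives $\|u\|_{L^q(\mathbb{R}^n)}\le C\,N_{0}(f)+C\,N_{0}(Vu)$. The data term is harmless: because $\frac1p-\frac12>\frac1{2n}$, Hölder on each annulus $C(j)$ followed by a geometric summation in $j$ yields $N_{0}(f)\le C\|f\|_{L^p(\mathbb{R}^n)}$. Everything thus reduces to estimating the single perturbative term $N_{0}(Vu)$.

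Here lies the \emph{main obstacle}. Estimating $N_{0}(Vu)$ by Hölder against $|V|\le C(1+|x|)^{-(\gamma+\mu)}$ naturally produces a weighted norm $\|u\|_{\frac{-(1+\delta)}{2}}$, and on $\Delta_{0}(n)$ this is controlled by $\|f\|_{L^p}$ through Step~2 of Theorem~\ref{thm:elecmag} (the Morrey--Campanato estimate of Theorems~\ref{thm:triple}--\ref{thm:apriori} dualized). That step, however, requires the data exponent to be good, $\frac1p-\frac12\ge\frac1{n+1}$, and this is precisely what now fails; on the bad side the weighted control of $u$ by $\|f\|_{L^p}$ is simply not available. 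The idea is to buy back the missing control with decay of $V$. I split $V$ into a far tail and a local part: the tail can be made arbitrarily small in the $L^{\rho}$ norm with $\frac1\rho=\frac1p-\frac1q$, so it contributes $\alpha\|u\|_{L^q}$ with $\alpha$ as small as desired, to be absorbed on the left; the local part is routed through the nearest companion exponent on the boundary $\frac1p-\frac12=\frac1{n+1}$ of $\Delta_{0}(n)$, where Theorem~\ref{thm:elecmag} and its Step~2 do hold, the extra powers of $(1+|x|)$ extracted from $V$ paying for the displacement from the actual exponent to that boundary. Balancing the Hölder exponents so that $V(1+|x|)^{(1+\delta)/2}$ lands in the correct $L^{\sigma}$ while the companion point remains inside $\Delta_{0}(n)$ is exactly what forces the threshold \eqref{eq:gamma}, in which the gain $n\{\frac2{n+1}-(\frac1p-\frac12)\}$ measures that displacement.

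To make the absorption legitimate one needs $\|u\|_{L^q}<+\infty$ a priori; this holds for $f$ of compact support by the Step~1 bound $\|u\|_{L^q}\le C\|f\|_{\frac{1+\delta}{2}}$, and the general case follows by density once the final constant is seen to be independent of the support. Choosing $\alpha$ small and absorbing then gives $\|u\|_{L^q(\mathbb{R}^n)}\le C\|f\|_{L^p(\mathbb{R}^n)}$ with $C$ independent of $\epsilon$ on the second alternative; the first alternative follows by the duality fixed at the outset, and together with the $\Delta_{0}(n)$ case of Theorem~\ref{thm:elecmag} this establishes the estimate throughout $\Delta(n)$. I expect the delicate point to be the bookkeeping of the third paragraph, namely tracking precisely how much decay is spent to cross from the bad exponent to the boundary of $\Delta_{0}(n)$ while keeping all constants uniform in $\epsilon$ and independent of the support of $f$.
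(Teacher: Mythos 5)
There is a genuine gap, in fact two. First, your reduction $\|u\|_{L^q}\le C\,N_0(f)+C\,N_0(Vu)$ followed by the claim $N_0(f)\le C\|f\|_{L^p(\mathbb{R}^n)}$ is false: throughout $\Delta(n)$ one has $\frac1p>\frac{n+1}{2n}>\frac12$, so $p<2$, and H\"older on an annulus bounds the $L^p$ norm by the $L^2$ norm, not the other way around; an $L^p$ function with $p<2$ need not even be locally square integrable, so $N_0(f)$ can be infinite while $\|f\|_{L^p}$ is finite. The cure (and what the paper does) is to split the resolvent application: the data term $f$ is sent through the free $L^p$--$L^q$ estimate of Theorem \ref{thm:Helmfree}, which is available on all of $\Delta(n)$, and only the perturbative term $Vu$ is treated with the $N_0$/weighted machinery.

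Second, and more seriously, you chose to treat directly the alternative $\frac1{2n}<\frac1p-\frac12<\frac1{n+1}$, which is exactly the case in which the weighted a priori bound $\|u\|_{\frac{-(1+\delta)}{2}}\le C\|f\|_{L^p}$ of Step 2 is unavailable; you acknowledge this and propose to compensate by splitting $V$ into a far tail (absorbed as $\alpha\|u\|_{L^q}$) and a local part ``routed through a companion exponent,'' but this argument is never carried out, and its first step already fails: putting the tail of $V$ in $L^{\rho}$ with $\frac1\rho=\frac1p-\frac1q\ge\frac2{n+1}$ requires $\gamma+\mu>n\left(\frac1p-\frac1q\right)\ge\frac{2n}{n+1}$, which is not implied by \eqref{eq:gamma}. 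The paper instead treats directly the dual alternative $\frac1{2n}<\frac12-\frac1q<\frac1{n+1}$: there $\frac1p-\frac12\ge\frac1{n+1}$, so Step 2 does hold and $\|u\|_{\frac{-(1+\delta)}{2}}\le C\|f\|_{L^p}$ is at hand; the only obstruction is that the dual of \eqref{eq:tripletrun} fails at the exponent $q$, and this is bypassed by estimating $Vu$ through the free $L^{p_1}$--$L^q$ estimate with $\frac1{p_1}-\frac1q=\frac2{n+1}$ and the H\"older bound
\begin{equation*}
\|Vu\|_{L^{p_1}(\mathbb{R}^n)}\le C\bigl\|(1+|x|)^{-(\gamma+\mu)+\frac{1+\delta}{2}}\bigr\|_{L^{r}(\mathbb{R}^n)}\,\|u\|_{\frac{-(1+\delta)}{2}},
\qquad \frac1{p_1}=\frac1r+\frac12,
\end{equation*}
where the finiteness of the $L^r$ norm is exactly the condition \eqref{eq:gamma}. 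The remaining alternative then follows by duality, just as you anticipated, but you have set up the argument so that the hard case must be done directly, and for that case no complete proof is given.
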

\begin{proof}
As for the proof of Theorem \ref{thm:elecmag}, this will be divided in three steps. The first two steps are the same, so we will skip them. The main difference appears in Step 3.
\begin{remark}\label{rm:proof}
As we have seen in Theorem \ref{thm:elecmag}, whenever $\left(\frac{1}{p},\frac{1}{q}\right)\in\Delta_{0}(n)$, the decay assumption \eqref{eq:potmag} for the electric potential $V$ is sufficient ir order to prove the $L^{p}$-$L^{q}$ estimates. As we will see, outside this region, more decay for the electric potential $V$ potential is needed. 
\end{remark}

\noindent{\bf Step 3.} Now let us consider $\left(\frac{1}{p},\frac{1}{q}\right)\in\Delta(n)$ such that $\frac{1}{2n}<\frac{1}{2}-\frac{1}{q}<\frac{1}{n+1}$. Then from Theorem \ref{thm:Helmfree} and observing that, since the dual estimate of \eqref{eq:tripletrun} in Theorem \ref{thm:triple} can not be applied for the perturbative term $Vu$ because we are outside the allowed range for $q$, we conclude that the $L^q$ norm of the solution of the equation \eqref{eq:Helmelec} can be bounded as follows 
\begin{align}\label{eq:Vu}
	\|u\|_{L^q(\mathbb{R}^n)}&=\|(-\Delta+(1\pm i\epsilon))^{-1}(f-Vu)\|_{L^q(\mathbb{R}^n)}\\
	&
	\leq C(\|f\|_{{L^p(\mathbb{R}^n)}}+\|Vu\|_{{L^{p_{1}}(\mathbb{R}^n)}}).
	\nonumber
\end{align}
where $C$ does not depend on $\epsilon$.\\
Here $p_{1}$ is given by
\begin{equation}
	\frac{1}{p_{1}}-\frac{1}{q}=\frac{2}{n+1},\quad\frac{1}{2n}<\frac{1}{2}-\frac{1}{q}<\frac{1}{n+1}.
\end{equation}
We have taken the point $p_{1}$ being in the line $1/p_{1}-1/q=2/(n+1)$ in order the require the smallest decay for $V$.

Now, we can bound the term $Vu$ in \eqref{eq:Vu} as follows. By applying H\"older inequality we trivially get
\begin{align}\label{eq:holder}
\|Vu\|_{{L^{p_{1}}(\mathbb{R}^n)}}&\leq C\|(1+|x|)^{-(\gamma+\mu)+\frac{1+\delta}{2}}u(1+|x|)^{-\frac{1+\delta}{2}}\|_{{L^{p_{1}}(\mathbb{R}^n)}}\\
&\leq C\|(1+|x|)^{-(\gamma+\mu)+\frac{1+\delta}{2}}\|_{{L^{r}(\mathbb{R}^n)}}\|u(1+|x|)^{-\frac{1+\delta}{2}}\|_{{L^{2}(\mathbb{R}^n)}}.
\nonumber
\end{align}
where $r$ is given by
\begin{equation}
	\frac{1}{p_{1}}=\frac{2}{n+1}+\frac{1}{q}=\frac{2}{n+1}-\left(\frac{1}{2}-\frac{1}{q}\right)+\frac{1}{2}=\frac{1}{r}+\frac{1}{2}.
\end{equation}
We have that
\begin{equation}
\|(1+|x|)^{-(\gamma+\mu)+\frac{1+\delta}{2}}\|_{{L^{r}(\mathbb{R}^n)}}<\infty,
\end{equation}
provided that
\begin{equation}
	\gamma>\frac{1+\delta}{2}-\mu+n\left\{\frac{2}{n+1}-(\frac{1}{2}-\frac{1}{q})\right\}.
\end{equation}

Finally, from \eqref{eq:Vu}, \eqref{eq:holder} and reminding that $\|u\|_{\frac{-(1+\delta)}{2}}$ is still bounded for  $\left(\frac{1}{p},\frac{1}{q}\right)\in\Delta(n)$ such that $\frac{1}{2n}<\frac{1}{2}-\frac{1}{q}<\frac{1}{n+1}$, we can conclude the final estimate
\begin{equation}
		\|u\|_{L^q(\mathbb{R}^n)}\leq C \|f\|_{L^p(\mathbb{R}^n)}.
\end{equation}
Now, by applying duality we have that the result is true whenever $\left(\frac{1}{p},\frac{1}{q}\right)\in\Delta(n)$ such that $\frac{1}{2n}<\frac{1}{p}-\frac{1}{2}<\frac{1}{n+1}$.
This ends the proof.
\end{proof}
\begin{remark}
Notice that, from \eqref{eq:gamma}, the necessary decay $\gamma$ for $V$ in order to obtain the result grows as we approach the upper frontier of $\Delta(n)$ (See line segment $AB$ Figure \ref{fig:deltan0med1}). However, we can always take $\gamma<2$ and $|V(x)|\leq C/(1+|x|)^\gamma$, where $C$ is  not necessarily small.
\end{remark}

\end{document}